\newcommand{\basef}{\mathbb{k}}
\newcommand{\base}{\mathrm{S}}
\newcommand{\baseS}{\mathrm{S}}
\newcommand{\QQ}{\mathbb{Q}}
\newcommand{\X}{\mathscr{X}}
\newcommand{\Y}{\mathscr{Y}}
\newcommand{\W}{\mathscr{W}}
\newcommand{\sX}{\mathrm{X}}
\newcommand{\sY}{\mathrm{Y}}
\newcommand{\sZ}{\mathrm{Z}}
\newcommand{\G}{\mathrm{G}}
\renewcommand{\L}{\mathrm{L}}
\renewcommand{\P}{\mathrm{P}}
\newcommand{\Q}{\mathrm{Q}}
\newcommand{\sW}{\mathrm{W}}
\newcommand{\GG}{\mathbb{G}}
\newcommand{\Bs}{\mathrm{B}}
\newcommand{\N}{\mathscr{N}}
\newcommand{\ind}{\mathscr{I}}
\newcommand{\res}{\mathscr{R}}
\newcommand{\Hom}{\operatorname{Hom}}
\newcommand{\iHom}{\underline{\operatorname{Hom}}}
\renewcommand{\unit}{1}
\renewcommand{\H}{\mathrm{H}}
\newcommand{\Mod}{\mathrm{Mod}}
\newcommand{\Stk}{\mathrm{Stk}}
\newcommand{\tqStk}{\mathrm{tqStk}}
\newcommand{\Coh}{\mathrm{Coh}}
\newcommand{\C}{\mathscr{C}}
\renewcommand{\Pr}{\mathrm{Pr}}
\newcommand{\Ind}{\operatorname{Ind}}
\newcommand{\K}{\mathrm{K}}
\renewcommand{\L}{\mathrm{L}}
\newcommand{\st}{\mathrm{st}}
\newcommand{\cusp}{\mathrm{cusp}}
\newcommand{\gm}{\mathrm{gm}}
\newcommand{\proj}{\mathrm{proj}}
\newcommand{\resol}{\mathrm{res}}
\newcommand{\redm}{\mathrm{r}}
\let\svc\c
\DeclareRobustCommand\c{\ifmmode{c}\else\expandafter\svc\fi}
\newcommand{\KGL}{\mathrm{KGL}}
\newcommand{\Pure}{\mathrm{DK}_\mathrm{pure}}
\newcommand{\Nis}{\mathrm{N}}
\newcommand{\DK}{\operatorname{DK}}
\newcommand{\DM}{\operatorname{DM}}
\newcommand{\SH}{\operatorname{SH}}
\newcommand{\KH}{\operatorname{KH}}
\DeclareMathOperator{\Map}{Map}
\newtheorem{prop}{Proposition}[section]
\newtheorem{lemma}[prop]{Lemma}
\newtheorem{thm}[prop]{Theorem}
\newtheorem{cor}[prop]{Corollary}
\newtheorem{theorem}{Theorem}
\theoremstyle{definition}
\newtheorem{defn}[prop]{Definition}
\newtheorem{example}[prop]{Example}
\theoremstyle{remark}
\newtheorem{rem}[prop]{Remark}
\title{Weights for \texorpdfstring{K}{$\K$}-motives on stacks}
\author{Thiago Landim}
\begin{document}

\maketitle

\begin{abstract}
    We construct the Chow weight structure on a full subcategory of the category of \texorpdfstring{$\mathrm{K}$}{K}-motives over a tame quotient stack in characteristic zero as defined by Hoyois. We also prove that in a quite general case, this full subcategory is exactly the category of geometric \texorpdfstring{$\mathrm{K}$}{K}-motives. We apply this to give a partial Springer decomposition in the context of \texorpdfstring{$\mathrm{K}$}{K}-motives. 
\end{abstract}

\tableofcontents

\section*{Introduction}

The existence of a (motivic) $t$-structure on the category of Beilinson motives has been conjectured since their conception. In spite of multiple attempts, the existence of such a $t$-structure was only proven for the category of Tate motives (and stratified versions thereof) and for the category of $n$-motives for $n=0$ and $1$.

In a different direction, Bondarko defined in \cite{bondarko_original} a dual notion of $t$-structure which he called \emph{weight structure} and which he proved existed for Beilinson motives on schemes (see also \cite{hebert} and \cite{bondarko_weights}). Later Bondarko and Ivanov in \cite{cdh_weight} proved the existence of a weight structure on the category of $\mathrm{cdh}$-motives with integral coefficients as defined by Cisinski and Déglise in \cite{cdh}.

A good motivic homotopy theory for stacks was defined in \cite{hoyois_1} by Hoyois. For any $\X$ in a category of well-behaved quotient stacks $\tqStk_\baseS$, he was able to define the category of motivic spectra $\SH(\X)$ with the usual six operations and their compatibilities. This opened up the doors to talk about spectrum-valued invariants, like $\K$-theory, and in \cite{hoyois_2} he showed that homotopy invariant $\K$-theory $\KH$ is represented by a motivic spectrum $\KGL$.

Parallel to this development, our understanding of the geometry of algebraic stacks has greatly improved, especially after \cite{etale_local}, and now some proofs written out for schemes may be adapted to the context of stacks. Recently, Aranha and Chowdhury showed in \cite{Chow-weight-stacks} that there exists a weight structure in the category of (geometric) $\mathrm{cdh}$-motives on algebraic stacks over a characteristic zero field. We are going to adapt their methods to the context of $\K$-motives\footnote{After the work of Bondarko and Lugarev in \cite{weights_k-motives} (see also \cite{weight_filtrations_annala}) there exists a weight structure on the category of $\K$-motives on schemes.}.

First, similarly to \cite{weight_filtrations_annala}, we prove the existence of a weight structure on the full subcategory of \emph{resolvable} $\K$-motives. Second, we develop the main properties of geometric $\K$-motives over a general base, and show that in characteristic zero (or, more generally, under the existence of resolutions of singularities functorial for smooth morphisms), the category of geometric $\K$-motives coincides with the category of resolvable $\K$-motives. Our results may be described by the following theorem.

\begin{theorem}[Theorem \ref{thm:weight_res} and Lemma \ref{lemma:gm_resol}]
    Let $\base$ be a Noetherian quasi-excellent $\QQ$-scheme with finite Krull dimension. The functor $\DK_\gm \colon \tqStk_\base \to \mathrm{Cat}_\infty$ defines a $\mathrm{cdh}$-sheaf stable under $\otimes$, $f^*$ and $f_!$ for any quasi-projective morphism $f$ and admiting a bounded weight structure. Taking the Ind-completion, this defines a six functor formalism endowed with a weight structure for which the functors $f^*$ and $f_!$ are left weight exact and the functors $f_*$ and $f^!$ are right weight exact.
\end{theorem}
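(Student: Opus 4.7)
The approach decomposes into three parts, matching the two referenced results: construction of the weight structure on the subcategory $\DK_\resol(\X)$ of \emph{resolvable} $\K$-motives, identification $\DK_\resol = \DK_\gm$ in characteristic zero, and promotion of the resulting data to the Ind-completion together with the six operations.

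For the weight structure on $\DK_\resol$, the plan is to apply Bondarko's generator criterion for weight structures on stable $\infty$-categories: given a small set $G$ of objects satisfying $\Hom(g, g'[n]) = 0$ for all $n > 0$ and $g, g' \in G$, there is a unique bounded weight structure with the additive envelope of $G$ as heart. I would take $G$ to be the collection of motives $f_* \unit_\Y$ together with their Bott-twists, for $f \colon \Y \to \X$ a projective morphism with $\Y$ smooth over $\base$. The negativity property reduces by adjunction and base change to the computation of $\KH$-groups $\KH^{-n}(\Z)$ for smooth $\Z$, which vanish for $n > 0$ by homotopy invariance and the agreement of $\K$ and $\KH$ on regular quotient stacks.

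The identification of $\DK_\resol$ with $\DK_\gm$ uses resolution of singularities. Given a generator $f_! \unit_\Y$ of $\DK_\gm(\X)$ with $f \colon \Y \to \X$ quasi-projective, Noetherian induction on $\dim \Y$ together with a Chow envelope $\widetilde{\Y} \to \Y$ provided by the equivariant resolution of \cite{etale_local} produces a proper-cdh square; cdh-descent for $\DK$ then expresses $f_! \unit_\Y$ as an iterated cone of resolvable objects coming from lower-dimensional strata and from $\widetilde{\Y}$, and one concludes by induction. This mirrors the argument in \cite{Chow-weight-stacks} and \cite{weight_filtrations_annala} adapted to the stacky setting.

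For the last part, cdh-descent of $\DK_\gm$ is inherited from that of $\DK$ in Hoyois's framework, while stability under $\otimes$, $f^*$, and $f_!$ for quasi-projective $f$ follows since these operations preserve the class of smooth projective generators. The Ind-completion inherits the weight structure by a standard extension-by-filtered-colimits argument, and the weight exactness of $f^*, f_!$ (resp.\ $f_*, f^!$) is then checked on the generators via adjunction. The main obstacle is the descent/resolution step at the stack level: controlling negative $\K$-theory of singular quotient stacks under equivariant resolution, and verifying that the cdh-topology on $\tqStk_\base$ in Hoyois's sense interacts correctly with the equivariant proper-cdh squares produced by Temkin-type resolutions. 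A secondary subtlety is ensuring that the pointwise weight structure on $\DK_\resol(\X)$ is compatible under $f^*$ and $f_!$ strongly enough to produce weight exactness at the level of the full six functor formalism, rather than only pointwise weight data.
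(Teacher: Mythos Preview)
Your overall architecture matches the paper's: build the weight structure on $\DK_\resol$, identify $\DK_\resol$ with $\DK_\gm$ via resolution of singularities and $\mathrm{cdh}$-excision, then pass to Ind-completions. The induction-on-dimension argument in your second step is essentially the paper's Lemma~\ref{lemma:gm_resol}, except that the equivariant resolution comes from Temkin's functorial desingularization \cite{temkin_desingularization} (functoriality for smooth morphisms yields $\G$-equivariance, as in Lemma~\ref{lemma:resolution}), not from \cite{etale_local}.

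There is, however, a genuine gap in your first step. You claim that the negativity condition reduces, via adjunction and base change, to the vanishing of $\KH^{-n}(\Z)$ for $n>0$ with $\Z$ \emph{smooth}. But for $f \colon \Y \to \X$ and $f' \colon \Y' \to \X$ projective with $\Y,\Y'$ regular, the fibre product $\Z = \Y \times_\X \Y'$ is in general highly singular, so you cannot invoke regularity to conclude $\KH(\Z)$ is connective. The paper's computation is
\[
\operatorname{Maps}(f_! \unit_\Y, f'_! \unit_{\Y'}) \;\simeq\; \operatorname{Maps}(\unit_{\Y \times_\X \Y'}, \pi_2^! \unit_{\Y'}) \;\simeq\; \K(\Coh(\Y \times_\X \Y')),
\]
where the last identification uses that $\Y'$ is regular so that $\pi_2^! \unit_{\Y'}$ is a dualizing object, and hence one lands in $G$-theory rather than $\KH$-theory. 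Connectivity then follows from \cite[Proposition~3.1]{k-g-theory_khan}: $G$-theory of a Noetherian stack is connective regardless of regularity. Your argument as written would require negative $\KH$-groups of singular quotient stacks to vanish, which is false. Once you replace the $\KH$ step by this $G$-theory identification, the rest of your plan goes through along the paper's lines.
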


In the final section, we apply this result to give a decomposition of the category of $\G$-equivariant $\K$-motives on the nilpotent cone $\N_\G$ of a split reductive group $\G$ over a characteristic zero field.

\subsection*{Notations}

We denote by $\Pr_\st^\L$ the category of presentable stable $\infty$-categories and functors between them preserving colimits. If $\C$ is an $\infty$-category and $x, \, y$ are objects of $\C$, we denote by $\mathrm{Map}_\C(x,y)$ the mapping space between $x$ and $y$. Sometimes we denote by $\Hom_\C(x, y) \coloneqq \pi_0 \Map_\C(x,y)$ the set of (homotopy classes) of morphisms. If $\C$ is presentable and stable, then it is enriched in spectra, and we denote by $\mathrm{Maps}_\C(x,y)$ the mapping spectrum.

\section{Preliminaries}

Let $\base$ be a quasi-compact separated base scheme which will be fixed for the whole paper. In this section, we will recall the theory motivic spectra as defined by Hoyois in \cite{hoyois_1}. An approach following \cite[Appendix A]{khan_ravi} is possible and the only extra steps would be applying induction on the length of the scallop decomposition.

\begin{defn}
A flat finitely presented group scheme $\G$ over $\base$ is said to be a \emph{tame group} if:
\begin{enumerate}[(a)]
    \item $\base$ admits a Nisnevich covering by schemes having the $\G$-resolution property \cite[Definition 2.7]{hoyois_1},
    \item  $\G$ is linearly reductive.
\end{enumerate}
\end{defn}

\begin{rem}
As observed in \cite[Remark 6.3]{etale_local}, if one also asks $\G$ to be separated with affine fibers, then condition (a) is redundant.  
\end{rem}

\begin{example}
    Any \emph{nice group} (i.e. an extension of a finite étale group scheme, of order prime to the characteristics of $\base$, by a group scheme of multiplicative type) is tame. 
\end{example}

\begin{example}
    If $\base$ has characteristic zero, then any reductive group is tame.
\end{example}

\begin{defn}
We define the 2-category $\tqStk_\base$ of \emph{tame quotient stacks} as the full subcategory of $\Stk_\base$ containing finitely presented $\base$-stacks with the resolution property, that are global quotient stacks $[\sX/\G]$, for a tame affine group scheme $\G$, such that the natural morphism $[\sX/\G] \to \Bs \G$ is quasi-projective Nisnevich-locally\footnote{A morphism which is quasi-projective Nisnevich-locally on $\base$ will be called an $\Nis$-quasi-projective morphism.} on $\base$.
\end{defn}

For any tame quotient stack $\X$, Hoyois defined the category of genuine motivic spectra $\SH(\X)$ which satisfies the usual compatibilities \cite[Theorem 1.1]{hoyois_1}. By \cite[Definition 5.1]{hoyois_2}, there exists a genuine motivic spectrum $\KGL_\X \in \SH(\X)$ representing homotopy invariant $\K$-theory $\KH$.

\begin{defn}
The category of $\K$-motives is defined as
\[
    \DK(\X) \coloneqq \Mod_{\KGL_\X}(\SH(\X)).
\]
\end{defn}

As it is the case with motivic spectra, $\K$-motives can be encoded as a functor from the category of $\Nis$-quasi-projective correspondence
\[
    \DK \colon \mathrm{Corr}(\tqStk_\base) \to \Pr_\st^\L.
\]

In other words, there exists functors $f^*$, $f_*$, $f_!$, $f^!$ for $\Nis$-quasi-projective morphisms and the bifunctors $\otimes$ and $\iHom$, and they satisfy the usual compatibilities of base change, localization, projection formulas and homotopy invariance. By Bott periodicity, if $f$ is smooth, then $f^! = f^*$, and therefore the left adjoing of $f^*$ is $f_!$.

\begin{prop}
    For any tame quotient stack $\X$, the pullback $\nu^* \colon \DK(\X) \to \DK(\X_\mathrm{red})$ is an equivalence.
\end{prop}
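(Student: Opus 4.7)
The plan is to invoke the localization property of the six functor formalism on $\DK$. Writing $\nu \colon \X_\red \hookrightarrow \X$ for the nilpotent closed immersion and $j \colon \U \hookrightarrow \X$ for its open complement, localization should give a cofiber sequence of endofunctors of $\DK(\X)$
$$j_! j^* \longrightarrow \id \longrightarrow \nu_* \nu^*.$$
Since $\X_\red$ and $\X$ share the same underlying topological space, the open complement $\U$ is empty; hence $\DK(\U) \simeq 0$ and the left-hand term vanishes, yielding an equivalence $\id \xrightarrow{\sim} \nu_* \nu^*$ on $\DK(\X)$.

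The second ingredient is that $*$-pushforward along a closed immersion is fully faithful, so the counit $\nu^* \nu_* \to \id$ will be an equivalence on $\DK(\X_\red)$. Combining the two steps, $\nu^*$ and $\nu_*$ are mutually inverse equivalences.

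The main task is to confirm that both localization and fully faithfulness of closed $*$-pushforward hold at the level of $\DK$ on tame quotient stacks. For $\SH$ these are part of Hoyois's formalism (closed immersions being $\Nis$-quasi-projective, hence in the class for which the six operations are available). To transfer them to $\DK = \Mod_{\KGL}(\SH)$, I would use that $\KGL$ is absolute in the sense that $f^* \KGL_\Y \simeq \KGL_\X$ compatibly in $f$; this lifts the six operations on $\SH$ to $\KGL$-modules while preserving both the localization cofiber sequence and the unit/counit of the $(\nu^*, \nu_*)$-adjunction. I expect this transfer to be purely formal, so the only real content is the already-known behaviour on $\SH$.
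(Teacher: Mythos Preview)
Your proposal is correct and is essentially the same approach as the paper's, which simply records the result as ``a corollary of the localization formula.'' You have just unpacked that sentence: the open complement of $\X_\red$ is empty, so the localization triangle collapses to $\id \xrightarrow{\sim} \nu_* \nu^*$, and combined with the standard full faithfulness of $\nu_*$ for closed immersions this gives the equivalence.
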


\begin{proof}
    This is a corollary of the localization formula.
\end{proof}

\begin{prop}
    The functor $\DK \colon \mathrm{tqStk}_\base^{\mathrm{op}} \to \Pr_{\st}^\L$ is a $\mathrm{cdh}$-sheaf.
\end{prop}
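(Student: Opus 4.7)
The plan is to reduce the claim to two separate descent statements, since the cdh topology on $\tqStk_\base$ is generated by Nisnevich covers together with abstract blow-up squares. Concretely, it suffices to show that $\DK$ sends Nisnevich distinguished squares and abstract blow-up squares of $\Nis$-quasi-projective morphisms to Cartesian squares in $\Pr_\st^\L$.

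For the Nisnevich half I would appeal directly to Hoyois's result that $\SH \colon \tqStk_\base^{\op} \to \Pr_\st^\L$ is a Nisnevich sheaf \cite[Theorem 6.18]{hoyois_1}. The spectrum $\KGL$ is compatible with pullbacks, in the sense that $f^* \KGL_\Y \simeq \KGL_\X$ canonically for any $f \colon \X \to \Y$ in $\tqStk_\base$; consequently $\KGL$ lifts to a Cartesian section of the functor $\CAlg \circ \SH$. Given such a section, the general nonsense of Lurie's higher algebra—namely that $\Mod_{(-)}$ commutes with limits of presentably symmetric monoidal $\infty$-categories along a Cartesian section of algebras—transports Nisnevich descent from $\SH$ to $\DK = \Mod_{\KGL}(\SH(-))$.

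For the blow-up half, fix an abstract blow-up square with $i \colon Z \hookrightarrow \X$ a closed immersion, open complement $j \colon U \hookrightarrow \X$, a proper map $p \colon \tilde{\X} \to \X$ inducing an isomorphism over $U$, and the base change $E \coloneqq Z \times_\X \tilde{\X}$ with maps $i' \colon E \to \tilde\X$ and $p' \colon E \to Z$. Since $\DK$ carries the six functor formalism for $\Nis$-quasi-projective morphisms, we have the localization cofiber sequences $j_! j^* \to \id \to i_* i^*$ on both $\DK(\X)$ and $\DK(\tilde\X)$. I would compute the pullback $\DK(\tilde \X) \times_{\DK(E)} \DK(Z)$ by splitting both factors using these recollements; the two ``open'' parts agree via the assumption that $p|_U$ is an isomorphism, and what remains is to identify the pullback of the closed pieces. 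Proper base change for $p$ (hence for $p'$) combined with the fact that $i_* = i_!$ and $p_* = p_!$ reduces the equivalence to the statement that $\DK(Z)$ is computed from $\DK(E)$ via the appropriate comparison, which is exactly the analogue in the $\K$-motivic setting of Aranha--Chowdhury's proof of cdh descent for $\mathrm{cdh}$-motives on stacks \cite{Chow-weight-stacks}. Their argument only uses the formal properties of the six functor formalism and localization, so it transfers.

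The main obstacle is the blow-up descent, rather than the Nisnevich half: one must carefully chase through the recollement/proper base change bookkeeping at the stack level, making sure that the Cartesian square of module categories is indeed identified with $\DK(\X)$ and not merely with a full subcategory. The reduction to the argument of \cite{Chow-weight-stacks} does this, but the statements there are phrased for $\mathrm{cdh}$-motives, so I would need to verify that each step uses only the abstract six functor package (localization, proper base change, smooth base change) that is also available for $\DK$.
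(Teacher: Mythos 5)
Your proof is correct in outline but takes a longer route than the paper, and the blow-up half is left as a sketch where the paper has a one-line citation. The paper's proof invokes \cite[Theorem 6.24]{hoyois_1}, which already establishes cdh descent (not merely Nisnevich descent) for $\SH$ on tame quotient stacks, and then transfers this to $\DK = \Mod_{\KGL}(\SH(-))$ by the same Cartesian-section-of-algebras argument you give for the Nisnevich half; no separate treatment of abstract blow-ups is needed at the level of $\DK$. You instead decompose cdh into Nisnevich plus abstract blow-up squares, import only Nisnevich descent from Hoyois, and then re-derive blow-up excision via recollements, proper base change, and the Aranha--Chowdhury argument. This is a legitimate alternative -- it is more self-contained and makes the mechanism of blow-up descent explicit -- but it duplicates work that Hoyois has already done upstream for $\SH$, and your own closing paragraph concedes that the recollement bookkeeping is the weak point and is not actually carried out. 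Since the descent you want to re-prove already holds one categorical level up (for $\SH$), the cleaner move is the paper's: prove once that $\Mod_{\KGL}$ carries limits of presentably symmetric monoidal $\infty$-categories along the Cartesian section $\KGL$ to limits, and apply it to the full cdh-descent statement for $\SH$ rather than only to the Nisnevich part.
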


\begin{proof}
    This follows from \cite[Theorem 6.24]{hoyois_1} and from the definition of $\DK(\X)$ as a category of modules on $\SH(\X)$. Alternatively, the same proof as in \emph{loc. cit.} works in this context.
\end{proof}

Descent for the $\mathrm{cdh}$ is equivalent to excision with respect to Nisnevich squares and abstract blow up squares. This means that, for any abstract blow up square
\[
\begin{tikzcd}
    \W \ar["k", hook]{r} \ar["q"']{d} & \Y \ar["p"]{d} \\
    \mathscr{Z} \ar["i"', hook]{r} & \X,
\end{tikzcd}
\]
one has
\[
    \DK(\X) = \DK(\Y) \times_{\DK(\W)} \DK(\mathscr{Z}),
\]
and similarly for Nisnevich squares. In other words, for any $M \in \DK(\X)$, one has the following Cartesian square
\[
\begin{tikzcd}
    M \ar{r} \ar{d} & i_*i_* M \ar{d} \\
    p_*p^* M \ar{r} & (pk)_* (pk)^* M.
\end{tikzcd}
\]

Before going to the next section, we remark a simple consequence of the functoriality of the resolution of singularities in characteristic zero.

\begin{lemma}\label{lemma:resolution}
    Fix $\base$ a Noetherian quasi-excellent scheme over $\mathrm{Spec}(\mathbb{Q})$. Let $\sX$ be a finite type reduced scheme over $\base$, and let $\G$ be a finite type smooth scheme over $\base$ acting on $\sX$. There exists a $\G$-equivariant projective birational morphism $\mathscr{F}(\sX) \to \sX$ such that $\mathscr{F}(\sX)$ is regular.
\end{lemma}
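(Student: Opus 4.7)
The plan is to invoke a functorial resolution of singularities for finite type reduced schemes over $\base$, then use its compatibility with smooth morphisms to descend it along the action of $\G$. For the first input, I would cite Temkin's desingularization theorem for quasi-excellent $\QQ$-schemes together with the work of Bierstone--Milman--Temkin--Włodarczyk on functoriality: there exists an assignment $\sX \mapsto \mathscr{F}(\sX)$ producing a projective birational morphism $\mathscr{F}(\sX) \to \sX$ with $\mathscr{F}(\sX)$ regular, and such that for any smooth morphism $f \colon \sY \to \sX$ of finite type reduced $\base$-schemes there is a canonical Cartesian square identifying $\mathscr{F}(\sY)$ with $\mathscr{F}(\sX) \times_\sX \sY$.

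Next I would apply this to the action. Consider the action map $\sigma \colon \G \times_\base \sX \to \sX$ and the second projection $p_2 \colon \G \times_\base \sX \to \sX$. Since $\G$ is smooth over $\base$, the projection $p_2$ is smooth, and $\sigma$ is related to $p_2$ by the automorphism $(g,x) \mapsto (g, g \cdot x)$ of $\G \times \sX$, hence is smooth as well. Functoriality yields canonical identifications
\[
    \sigma^* \mathscr{F}(\sX) \;\cong\; \mathscr{F}(\G \times \sX) \;\cong\; p_2^* \mathscr{F}(\sX),
\]
and composing these gives an isomorphism $\alpha \colon \sigma^* \mathscr{F}(\sX) \xrightarrow{\sim} p_2^* \mathscr{F}(\sX)$ of schemes over $\G \times \sX$. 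This $\alpha$ is the candidate for a lifted $\G$-action on $\mathscr{F}(\sX)$.

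It remains to verify the cocycle and unit axioms. For the cocycle condition, one pulls back $\alpha$ along the three evident smooth maps $\G \times \G \times \sX \to \G \times \sX$ (namely $\id \times \sigma$, $m \times \id$, and $p_{23}$); each of the resulting isomorphisms between various pullbacks of $\mathscr{F}(\sX)$ is, by the naturality built into the smooth-functorial construction of $\mathscr{F}$, the canonical comparison coming from $\mathscr{F}(\G \times \G \times \sX)$, so they all agree. The unit axiom is checked analogously, using that along the unit section the pullbacks of $\sigma$ and $p_2$ both agree with the identity of $\sX$, and the canonical isomorphisms produced by functoriality restrict to the identity on $\mathscr{F}(\sX)$. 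Equivariance of the morphism $\mathscr{F}(\sX) \to \sX$ is then tautological from the construction.

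The main obstacle is the input: one needs a desingularization functor $\mathscr{F}$ that is genuinely functorial with respect to \emph{all} smooth morphisms (not merely étale ones, and in particular not only for morphisms between schemes of the same dimension). In the setting at hand, this is available because $\base$ is a Noetherian quasi-excellent $\QQ$-scheme; once this input is granted, the descent argument above is formal, and in particular does not require $\G$ to be connected, reductive, or affine.
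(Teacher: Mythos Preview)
Your proof is correct and follows essentially the same approach as the paper: both invoke Temkin's smooth-functorial desingularization and then use functoriality along the smooth maps $\sigma, p_2 \colon \G \times_\base \sX \to \sX$ to lift the action. Your version is more careful than the paper's, which simply asserts that $\mathscr{F}(a)$ defines a $\G$-action without spelling out the cocycle and unit checks; your explicit verification of these via the canonical identifications $\mathscr{F}(\G \times \G \times \sX) \cong (\text{each pullback})$ is the right way to fill in that gap.
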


\begin{proof}
    By \cite[Theorem 1.2.1]{temkin_desingularization}, there exists a projective birational morphism $\mathscr{F}(\sX) \to \sX$ such that $\mathscr{F}(\sX)$ is regular. Moreover, the functoriality of $\mathscr{F}$ with respect to smooth morphisms implies that, if $a \colon \G \times_\base \sX \to \sX$ denotes the action of $\G$ on $\sX$, then the morphism $\mathscr{F}(a) \colon \G \times_\base \mathscr{F}(\sX) \to \mathscr{F}(\sX)$ determines a $\G$-action on $\mathscr{F}(\sX)$ for which the projective map $\mathscr{F}(\sX) \to \sX$ is $\G$-equivariant.
\end{proof}

\section{Weights on \texorpdfstring{$\K$}{K}-motives}

In this section, we will define a weight structure on the category of \emph{resolvable} $\K$-motives, similarly to \cite{weight_filtrations_annala}. This category, which has an \emph{ad hoc} definition, will be better described if $\base$ is a Noetherian quasi-excellent over $\mathrm{Spec}(\mathbb{Q})$ of finite Krull dimension. 

\begin{defn}
    For any tame quotient stack $\X$, the category of pure $\K$-motives $\Pure(\X) \subset \DK(\X)$ is the smallest additive subcategory generated under retracts by the collection
    \[ 
    \{f_! \unit_\Y \mid f \colon \Y \to \X \textrm{ is projective and } \Y \textrm{ is regular Noetherian}\}.
    \]
    The category of \emph{resolvable} $\K$-motives $\DK_\resol(\X) \subset \DK(\X)$ is the full thick subcategory generated by $\Pure(\X)$, in other words, the full subcategory generated by $\Pure(\X)$ under finite limits, finite colimits and retracts.
\end{defn}

\begin{thm}\label{thm:weight_res}
    For any tame quotient stack $\X$, the category $\DK_\resol(\X)$ admits a bounded weight structure whose heart is $\Pure(\X)$.
\end{thm}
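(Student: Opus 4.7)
The proof plan is to invoke Bondarko's recognition criterion for weight structures \cite{bondarko_original}: for a stable $\infty$-category $\C$ and a full additive subcategory $\mathcal{H} \subset \C$ closed under retracts, a bounded weight structure on $\C$ with heart $\mathcal{H}$ exists precisely when $\mathcal{H}$ generates $\C$ as a thick subcategory and is \emph{negative}, in the sense that $\Hom_\C(M, N[n]) = 0$ for all $M, N \in \mathcal{H}$ and $n > 0$. By the very definition of $\DK_\resol(\X)$, the subcategory $\Pure(\X)$ is closed under retracts and generates $\DK_\resol(\X)$ as a thick subcategory, so only negativity requires genuine argument.

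Since $\Hom$ is additive and retract-closed, negativity reduces to a vanishing statement on the generators: for any projective morphisms $f \colon \Y \to \X$ and $g \colon \Z \to \X$ with $\Y, \Z$ regular Noetherian, and any $n > 0$, one must show $\Hom_{\DK(\X)}(f_! \unit_\Y, g_! \unit_\Z[n]) = 0$. Writing $\W \coloneqq \Y \times_\X \Z$ with projections $f' \colon \W \to \Z$ and $g' \colon \W \to \Y$, the proper base-change equivalence $g^* f_! \simeq f'_! g'^*$ from Hoyois's formalism, combined with the adjunctions $g^* \dashv g_*$ and $f'_! \dashv f'^!$, identifies
\[
\Hom_{\DK(\X)}(f_! \unit_\Y, g_! \unit_\Z[n]) \simeq \Hom_{\DK(\W)}(\unit_\W, f'^! \unit_\Z[n]).
\]

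The remaining input is a $\K$-theoretic vanishing. Unwinding the $\Mod_{\KGL}$-structure rewrites this group as $\pi_{-n}$ of $\Map_{\SH(\W)}(\unit_\W, f'^! \KGL_\Z)$. Since $\Z$ is regular, $\KGL_\Z$ represents Quillen $\K$-theory on $\Z$, and the standard identification of the shriek-pullback of the $\K$-theory spectrum (Thomason--Trobaugh on schemes, in its equivariant form on tame quotient stacks) realises $f'^! \KGL_\Z$ as the $G$-theory spectrum of coherent sheaves on $\W$. Consequently, the group in question is the negatively-indexed $G$-theory group $G_{-n}(\W)$; but $G$-theory is by construction produced from an exact category by Quillen's $Q$-construction, so $G_k(\W) = 0$ for $k < 0$ automatically, yielding the desired vanishing and hence the negativity of $\Pure(\X)$.

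The hard part is expected to be the final identification $f'^! \KGL_\Z \simeq G(\W)$ in the stacky setting: the scheme case is classical, but passing to tame quotient stacks in $\tqStk_\base$ requires combining an equivariant form of Thomason's comparison theorem between $\K'$-theory and $\KGL$-cohomology with the absolute-purity features of the formalism of \cite{hoyois_2}, and checking compatibility with Nisnevich descent along the scallop decomposition of $\X$. Everything else is a formal consequence of the six-functor formalism and Bondarko's abstract machinery, running in parallel with \cite{weight_filtrations_annala}.
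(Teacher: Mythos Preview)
Your proposal is correct and follows essentially the same route as the paper: reduce to negativity on generators, use base change and adjunction to rewrite the mapping spectrum as $G$-theory of the fiber product, and invoke connectivity of $G$-theory together with Bondarko's recognition criterion (the paper cites the version in \cite{sosnilo_heart}). The step you flag as the ``hard part''---the identification of $\Map(\unit_\W, f'^!\unit_\Z)$ with $\K(\Coh(\W))$ in the tame quotient stack setting---is already available in the literature: the paper simply cites \cite[Remark~5.7]{hoyois_2} for the identification and \cite[Proposition~3.1]{k-g-theory_khan} for connectivity, so no new equivariant Thomason-type argument is needed.
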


\begin{proof}
    By base change and \cite[Remark 5.7]{hoyois_2}, the mapping spectrum
    \[
    \begin{split}
        \Map(f_! \unit_{\Y}, f'_! \unit_{\Y'}) &= \Map(f'^*f_!\unit_{\Y}, \unit_{\Y'}) \\
        &= \Map(\pi_{2!}\pi_1^*\unit_{\Y}, \unit_{\Y'}) \\
        &= \Map(\unit_{\Y \times_\X \Y'}, \pi_2^! \unit_{\Y'}) \\
        &= \K(\Coh(\Y \times_\X \Y')),
    \end{split}
    \]
    is connective by \cite[Proposition 3.1]{k-g-theory_khan}. Therefore \cite[Remark 2.2.6]{sosnilo_heart} implies the existence of the given weight structure.
\end{proof}

\begin{rem}
    The general case for linearly scalloped stacks follows from an induction argument together with \cite[Theorem 3.9]{k-g-theory_khan}.
\end{rem}

\begin{prop}\label{prop:exactness_prop}
Let $f \colon \X' \to \X$ be an $\Nis$-quasi-projective morphism between tame quotient stacks.
\begin{enumerate}
    \item If $f$ is projective, then $f_!$ is weight exact.
    \item If $f$ is smooth, then $f^*$ is weight exact.
\end{enumerate}
\end{prop}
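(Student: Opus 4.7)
The plan is to use the characterization of weight-exactness for a bounded weight structure: since the weight structure of Theorem~\ref{thm:weight_res} is bounded with heart $\Pure$, the classes $\DK_\resol^{w\leq 0}$ and $\DK_\resol^{w\geq 0}$ are each generated from $\Pure$ under extensions and retracts (in opposite shift directions). Both $f_!$ and $f^*$ are exact between stable $\infty$-categories and preserve retracts, so it suffices to check that each functor sends the generators $g_!\unit_\Y$ of the relevant heart into the heart of the target.

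For part (1), if $f$ is projective, I will use the composition law for lower-shriek: given a generator $g_!\unit_\Y \in \Pure(\X')$ with $g \colon \Y \to \X'$ projective and $\Y$ regular Noetherian, one has $f_! g_!\unit_\Y \simeq (f\circ g)_!\unit_\Y$. Since projective morphisms are stable under composition and the source $\Y$ is unchanged, the result is already a generator of $\Pure(\X)$, so $f_!$ maps $\Pure(\X')$ into $\Pure(\X)$.

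For part (2), if $f$ is smooth, I will use proper base change. Given a generator $g_!\unit_\Y \in \Pure(\X)$ with $g$ projective and $\Y$ regular Noetherian, I form the Cartesian square
\[
\begin{tikzcd}
    \Y' \ar["g'"]{r} \ar["f'"']{d} & \X' \ar["f"]{d} \\
    \Y \ar["g"']{r} & \X
\end{tikzcd}
\]
and combine the base change isomorphism $f^* g_! \simeq g'_! f'^*$ with the symmetric monoidality of $f^*$ to obtain $f^* g_!\unit_\Y \simeq g'_!\unit_{\Y'}$. The morphism $g'$ is projective as a base change of a projective morphism, so it remains to argue that $\Y'$ is regular Noetherian. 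This is the step I expect to require the most care: the morphism $f'$ is smooth (being a base change of $f$), and I need that smoothness preserves regularity in the stack setting, which reduces to the corresponding fact for schemes via smooth atlases but requires checking that the tame-quotient-stack framework of $\tqStk_\base$ is stable under the relevant pullback. Once this is established, $g'_!\unit_{\Y'} \in \Pure(\X')$, and weight exactness of $f^*$ follows.
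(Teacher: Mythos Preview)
Your proposal is correct and is precisely the unpacking of what the paper means by ``follows from the definition'': the paper's own proof is a single sentence asserting exactly this, and your argument via composition of projective morphisms for $f_!$ and smooth base change for $f^*$ is the intended content. Your caution about regularity of $\Y'$ is warranted but easily discharged, since regularity is smooth-local on the source and smooth morphisms over a regular Noetherian base have regular Noetherian total space already at the level of schemes.
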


\begin{proof}
It follows from the definition that $f_!$ is weight exact if $f$ projective and that $f^*$ is weight exact if $f$ is smooth.
\end{proof}

\section{Geometric \texorpdfstring{$\K$}{K}-motives}

In this section, we define the notion of geometric $\K$-motives. It is the same definition as the usual one for $\DM$, except Bott periodicity allows us to ignore the Tate twist.

\begin{defn}
The category of geometric $\K$-motives $\DK_\gm(\X) \subset \DK(\X)$ is the full stable thick subcategory generated by the collection
\[
    \{ f_! \unit_\Y  \mid f \textrm{ is smooth and quasi-projective}\}.
\]
\end{defn}

\begin{rem}
Since $\DK$ satisfies Nisnevich excision, replacing quasi-projective by $\mathrm{N}$-quasi-projective in definition of the collection wouldn't change the category $\DK_\gm(\X)$.
\end{rem}

\begin{prop}
If $f \colon \X' \to \X$ is an $\Nis$-quasi-projective morphism between tame quotient stacks, then $f^*$ sends $\DK_\gm(\X)$ into $\DK_\gm(\X')$.
\end{prop}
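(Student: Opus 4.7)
The strategy is the standard one: since $f^*$ is an exact functor of stable $\infty$-categories (it preserves finite limits, colimits, and retracts), and $\DK_\gm(\X)$ is a thick subcategory, it suffices to check that $f^*$ sends the generating objects of $\DK_\gm(\X)$ into $\DK_\gm(\X')$. So I would reduce to showing $f^* g_! \unit_\Y \in \DK_\gm(\X')$ for every smooth quasi-projective $g \colon \Y \to \X$.

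For this, the main tool is base change. Form the Cartesian square
\[
\begin{tikzcd}
    \Y' \ar["g'"]{d} \ar["f'"]{r} & \Y \ar["g"]{d} \\
    \X' \ar["f"]{r} & \X,
\end{tikzcd}
\]
which lives in $\tqStk_\base$ since quasi-projectivity is stable under base change. Proper base change (which holds in $\DK$ as in $\SH$) gives
\[
    f^* g_! \unit_\Y \simeq g'_! f'^* \unit_\Y \simeq g'_! \unit_{\Y'}.
\]
The morphism $g'$ is the base change of $g$, hence smooth, and it is quasi-projective because $g$ is. Therefore $g'_! \unit_{\Y'}$ lies in $\DK_\gm(\X')$ directly by definition.

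In the slightly more general situation where one only knows $f$ to be $\Nis$-quasi-projective (so that $\X'$ is still in $\tqStk_\base$ but $g'$ is a priori only $\Nis$-quasi-projective after pulling back), one finishes by appealing to the remark immediately following the definition of $\DK_\gm$, which allows the generators to be taken over smooth $\Nis$-quasi-projective morphisms. The only delicate point to check is that proper base change is valid here: this is not really an obstacle, since $g$ is quasi-projective and hence $g_!$ is defined and satisfies base change against arbitrary morphisms in $\tqStk_\base$ via Hoyois' six-functor formalism, but it is the one place in the proof where one must be careful to invoke the correct compatibility.
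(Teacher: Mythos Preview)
Your proof is correct and is exactly the argument the paper has in mind: the paper's own proof is the single sentence ``This is an immediate consequence of the base change formula,'' and you have simply unpacked that sentence. Your care about the $\Nis$-quasi-projective case via the remark after the definition is appropriate and matches the paper's setup.
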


\begin{proof}
    This is an immediate consequence of the base change formula.
\end{proof}

\begin{lemma}\label{lemma:push_geom}
    Suppose $\base$ is excellent or has only a finite number of different characteristics. If $f \colon \X' \to \X$ is a quasi-projective morphism between tame quotient stacks, then $f_!$ sends $\DK_\gm(\X')$ into $\DK_\gm(\X)$.
\end{lemma}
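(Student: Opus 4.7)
The plan is to observe that since $\DK_\gm(\X')$ is generated as a thick subcategory by objects of the form $g_! \unit_\W$ with $g \colon \W \to \X'$ smooth quasi-projective, and since $f_!(g_! \unit_\W) = (fg)_! \unit_\W$ with $fg$ quasi-projective, it suffices to prove the following: for every quasi-projective morphism $h \colon \W \to \X$ between tame quotient stacks, $h_! \unit_\W \in \DK_\gm(\X)$. Note that $f_!$ preserves the relevant operations (colimits as a left adjoint; finite limits by stability; retracts automatically), so the reduction is clean.

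To handle such an $h$, I would factor it as
\[
\W \xrightarrow{\;c\;} \U \xrightarrow{\;j\;} \PP^n_\X \xrightarrow{\;p\;} \X,
\]
where $p$ is the (smooth projective) structure map, $\U \coloneqq \PP^n_\X \setminus (\overline{\W}\setminus \W)$ with $\overline{\W}$ the scheme-theoretic closure of $\W$ in $\PP^n_\X$, $j$ is the open inclusion, and $c$ is the closed immersion of $\W$ in $\U$. The composite $p \circ j$ is smooth and quasi-projective, so sends each generator $g_! \unit_\V$ of $\DK_\gm(\U)$ to $((pj)g)_! \unit_\V$, again a generator of $\DK_\gm(\X)$; hence $(pj)_!$ preserves $\DK_\gm$. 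The problem therefore reduces to showing $c_! \unit_\W \in \DK_\gm(\U)$.

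For this last step, I would apply localization to the closed immersion $c$. Writing $j'\colon \U \setminus \W \hookrightarrow \U$ for the open complement, one has the cofiber sequence
\[
j'_! \unit_{\U \setminus \W} \longrightarrow \unit_\U \longrightarrow c_! \unit_\W.
\]
Both $\unit_\U = (\id_\U)_! \unit_\U$ and $j'_! \unit_{\U \setminus \W}$ lie in $\DK_\gm(\U)$ — the latter because $j'$ is smooth and quasi-projective — so their cofiber $c_! \unit_\W$ is too, finishing the proof.

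The main point that needs care is the compactification step: one must verify that $\overline{\W}$, $\U$, and $\U \setminus \W$ are all objects of $\tqStk_\base$ and that the associated immersions are $\Nis$-quasi-projective (including checking that the scheme-theoretic closure inherits the tame quotient structure and that the open immersions are quasi-compact). This is where the hypothesis on $\base$ — excellent or with only finitely many residue characteristics — enters, by ensuring that finitely presented compactifications in this generality are well-behaved and compatible with the tame group action.
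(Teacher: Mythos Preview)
Your argument is correct and genuinely simpler than the paper's. The paper first reduces to the case of a closed immersion $i \colon \X' \hookrightarrow \X$ and then, for each smooth quasi-projective $f_0 \colon \W_0 \to \X'$, invokes Artin algebraization \cite[Theorem 5.1]{artin_algebraization} (after passing to a linearly fundamental replacement via \cite[Proposition 2.20]{hoyois_1} and homotopy invariance) to extend $f_0$ to a smooth morphism $f \colon \W \to \X$ defined over all of $\X$; localization on the open--closed decomposition $\X' \hookrightarrow \X \hookleftarrow \U$ then finishes. Your compactification trick bypasses this entirely: by realising $\W$ as a closed substack of an open $\U \subset \PP_\X(\mathscr{E})$, you obtain $h_!\unit_\W$ directly as a cofibre of two objects of the form $(\text{smooth quasi-projective})_!\unit$. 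This is essentially the mechanism behind the paper's later Lemma~\ref{lemma:equiv_proj_gm}, applied one step earlier.

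One correction, however: your final paragraph misidentifies where the hypothesis on $\base$ is used. In the paper it enters only through Artin algebraization, which your argument never invokes. The scheme-theoretic closure $\overline{\W}$, the open $\U = \PP_\X(\mathscr{E}) \setminus (\overline{\W}\setminus\W)$, and the complement $\U \setminus \W$ are all finitely presented quotients by $\G$ (since the immersion $\W \hookrightarrow \PP_\X(\mathscr{E})$ is finitely presented, hence quasi-compact, and closed complements of finitely presented opens admit finitely presented closed structures), and this requires nothing beyond $\base$ being qcqs. So your proof in fact establishes the lemma \emph{without} the hypothesis that $\base$ be excellent or have finitely many residue characteristics, which is strictly stronger than what the paper claims.
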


\begin{proof}
    Since $f$ is quasi-projective, we may factor it as
    \[
        \X' \xhookrightarrow{j} \mathscr{P} \xrightarrow{p} \X,
    \]
    where $j$ is an open immersion and $p$ is projective. Since $j$ is smooth and quasi-projective, $j_!$ sends $\DK_\gm(\X')$ into $\DK_\gm(\mathscr{P})$. Therefore, we may suppose $f \colon \X' \to \X$ is a projective morphism. 
    
    Since $f$ is projective, there exists a locally free sheaf $\mathscr{E}$ on $\X$ such that we may factor $f$ as
    \[
        \X' \xhookrightarrow{i} \mathbb{P}_\X(\mathscr{E}) \xhookrightarrow{p} \X,
    \]
    where $i$ is a closed immersion and $p$ is smooth and projective. Similarly as before, $p_!$ sends the category $\DK_\gm(\mathbb{P}_\X(\mathscr{E}))$ into $\DK_\gm(\X)$, therefore we may suppose $f \colon \X' \to \X$ is a closed immersion.

    Let $i \colon \X' \to \X$ be a closed immersion and let $f_0 \colon \W_0 \rightarrow \X'$ be a smooth, quasi-projective morphism. Let $\G$ be the tame group such that $\X' = [\sX'/\G]$. By \cite[Corollary 6.2]{etale_local} and Nisnevich excision, we may suppose $\G$ is embeddable, and by Zariski descent, we may suppose $\base$ is affine. By \cite[Proposition 2.20]{hoyois_1}, if $\W_0 = [\mathrm{W}_0/\G]$ there exists a $\G$-affine vector bundle $\pi \colon \widetilde{\mathrm{W}}_0 \to \mathrm{W}_0$ such that $\widetilde{\mathrm{W}}_0$ is affine over $\base$. In particular, by \cite[Remark 2.9]{etale_local}, the stack $\widetilde{\W}_0 = [\widetilde{\mathrm{W}}_0/\G]$ is linearly fundamental as defined by \cite[Definition 2.7]{etale_local}. By \cite[Theorem 1.3(2)]{hoyois_2}, the counit $\pi_! \pi^* \to \mathrm{id}$ is an equivalence and therefore
    \[
        (f \circ \pi)_! \unit_{\widetilde{\W}_0} \cong f_! \pi_! \pi^* \unit_{\W_0} \cong f_! \unit_{\W_0},
    \]
    which reduces the proof of the lemma to the case of a linearly fundamental stack.
    
    Finally, if $f_0 \colon \W_0 \rightarrow \X'$ is a smooth, quasi-projective morphism and $\W_0$ is linearly fundamental. By \cite[Theorem 5.1]{artin_algebraization}\footnote{This theorem just ensure smoothness, but \cite[Proposition 5.3(1)]{etale_local} ensures we can take take $f \colon \W \to \X$ to be affine (and therefore representable), since $\X$ has the resolution property and therefore, affine diagonal.}, there exists a smooth representable morphism $f \colon \W \to \X$ such that $f \vert_{\X'} \cong f_0$. If $\mathscr{U} = \X - \X'$ denotes the open complement, then the diagram
    \[
    \begin{tikzcd}
    \W_0 \ar[hook]{r} \ar{d} & \W \ar{d} & \W_\mathscr{U} \ar[hook']{l} \ar{d} \\
    \X' \ar[hook]{r} & \X & \mathscr{U} \ar[hook']{l}
    \end{tikzcd}
    \]
    gives the localization sequence
    \[
        j_! j^* f_! \unit_\W \to f_!\unit_\W \to i_!i^* f_!\unit_\W,
    \]
    which, by base change, implies $i_!(f_{0!} 1_{\W_0}) \in \DK_\gm(\X)$, completing the proof. 
\end{proof}

\begin{prop}
    If $\X$ is a tame quotient stack and $M, \, N \in \DK_\gm(\X)$ are geometric $\K$-motives, then $M \otimes N \in \DK_\gm(\X)$.
\end{prop}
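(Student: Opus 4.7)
The plan is to reduce from arbitrary pairs $M,N \in \DK_\gm(\X)$ to pairs of generators. Because $\otimes$ is part of a closed symmetric monoidal structure on $\DK(\X)$, it preserves colimits in each variable separately, and hence also cofibers, shifts and retracts. Thus the subcategory of $\DK(\X)$ consisting of objects $N'$ such that $M \otimes N' \in \DK_\gm(\X)$ is a stable thick subcategory, and likewise on the other side. It therefore suffices to treat $M = f_!\unit_\Y$ and $N = g_!\unit_\Z$ for smooth quasi-projective morphisms $f\colon \Y \to \X$ and $g \colon \Z \to \X$.

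For such a pair I would apply the projection formula to obtain
\[
    f_!\unit_\Y \otimes g_!\unit_\Z \;\cong\; f_!\bigl(\unit_\Y \otimes f^* g_!\unit_\Z\bigr) \;\cong\; f_! f^* g_!\unit_\Z,
\]
and then base change along the Cartesian square with top edge $g'\colon \Y\times_\X \Z \to \Y$ and left edge $f'\colon \Y\times_\X\Z \to \Z$ to rewrite $f^* g_!\unit_\Z \cong g'_! f'^*\unit_\Z = g'_!\unit_{\Y\times_\X\Z}$. This identifies $M \otimes N$ with $(f\circ g')_!\unit_{\Y\times_\X\Z}$.

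It remains to verify that the composite $f\circ g'\colon \Y\times_\X\Z \to \X$ is smooth and quasi-projective, so that $(f\circ g')_!\unit_{\Y\times_\X\Z}$ is itself one of the generators of $\DK_\gm(\X)$. Smoothness and quasi-projectivity are both stable under base change and composition, so $g'$ inherits these properties from $g$, and composition with $f$ preserves them as well. Hence the resulting object lies in $\DK_\gm(\X)$, and by the thickness reduction of the first paragraph the statement follows.

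The main technical subtlety I anticipate is not the formal manipulation but the projection and base change identifications in the context of tame quotient stacks: one must know that the fiber product $\Y\times_\X\Z$ remains a tame quotient stack and that both the projection formula and proper/smooth base change apply to the $\Nis$-quasi-projective morphisms involved. These are, however, all part of the six functor package recalled in the preliminaries (following Hoyois), so one only has to invoke them carefully; no genuinely new input is needed.
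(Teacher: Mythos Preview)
Your proposal is correct and follows exactly the paper's approach: reduce to generators by thickness in each variable, then apply the projection formula and base change to identify $f_!\unit_\Y \otimes g_!\unit_\Z$ with $(f\circ g')_!\unit_{\Y\times_\X\Z}$. The paper's proof is terser (it just says ``follows from the projection formula and the base change formula''), but the content is the same; your added remarks about smoothness and quasi-projectivity being stable under base change and composition simply spell out what the paper leaves implicit.
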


\begin{proof}
    For a fixed $M \in \DK_\gm(\X)$, the full subcategory of objects $N \in \DK(\X)$ such that $M \otimes N \in \DK_\gm(\X)$ is a thick subcategory and the same is true exchanging $M$ and $N$. In particular, it's enough to prove the assertion for $M = f_! \unit_\Y$ and $N = f'_! \unit_{\Y'}$, where $f \colon \Y \to \X$ and $f \colon \Y' \to \X$ are a smooth, quasi-projective morphisms. In this case, the proposition follows from the projection formula and the base change formula.
\end{proof}

\begin{lemma}
    If $\base$ is excellent or has only a finite number of different characteristics, then functor $\DK_\gm( - )$ is a $\mathrm{cdh}$-sheaf.
\end{lemma}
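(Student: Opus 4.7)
The plan is to build on the cdh-sheaf property already known for $\DK$. Since $\DK$ sends every cdh-square to a Cartesian square of categories, and since $f^*$ preserves geometric motives for every structural morphism $f$ (by the preceding proposition), the comparison functor $\DK_\gm(\X) \to \DK_\gm(\Y) \times_{\DK_\gm(\W)} \DK_\gm(\Z)$ associated to a cdh-square
$(\W \xrightarrow{k} \Y \xrightarrow{p} \X, \; \W \xrightarrow{q} \Z \xrightarrow{i} \X)$
is automatically fully faithful. What remains is essential surjectivity: given $M \in \DK(\X)$ whose pullbacks $p^*M$, $i^*M$, and $(pk)^*M$ all lie in the geometric subcategories, we must deduce $M \in \DK_\gm(\X)$.

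This would follow from expressing $M$ as a finite (co)limit of pushforwards of its pullbacks and invoking Lemma~\ref{lemma:push_geom}. In the abstract blow-up case, the maps $p, i, pk$ are all proper, so $f_* = f_!$ by proper base change, and the fiber sequence coming from cdh-descent for $\DK$ reads
\[
M \longrightarrow p_! p^* M \oplus i_! i^* M \longrightarrow (pk)_! (pk)^* M;
\]
Lemma~\ref{lemma:push_geom} places the right-hand two terms in $\DK_\gm(\X)$, so $M$ lies there by thickness. In the Nisnevich case, $p, i, pk$ are all smooth, so Bott periodicity gives $f^! = f^*$ and hence $f_! \dashv f^*$; combining the projection formula with the Mayer--Vietoris cofiber sequence $(pk)_! \unit_\W \to p_! \unit_\Y \oplus i_! \unit_\Z \to \unit_\X$ (which encodes Nisnevich excision for $\DK$) yields a cofiber sequence
\[
(pk)_! (pk)^* M \longrightarrow p_! p^* M \oplus i_! i^* M \longrightarrow M,
\]
and Lemma~\ref{lemma:push_geom} again shows $M$ is an iterated extension of geometric motives.

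The main obstacle I anticipate is the mild gap between ``$\Nis$-quasi-projective'' (which is all the structural morphisms between tame quotient stacks need satisfy) and the strictly ``quasi-projective'' hypothesis of Lemma~\ref{lemma:push_geom}. I would resolve this by handling the Nisnevich case first: any Nisnevich square may be refined by one in which $p$ is quasi-projective after a further Nisnevich localization on $\X$, so the argument above establishes Nisnevich descent for $\DK_\gm$. With Nisnevich descent of $\DK_\gm$ in hand, the abstract blow-up case can be attacked after passing to a Nisnevich cover of $\X$ on which $p$ becomes strictly projective, at which point the first half of the argument applies as stated and yields full cdh-descent.
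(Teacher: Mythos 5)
Your proof takes essentially the same route as the paper's. Both arguments reduce everything to the already-established cdh-descent for $\DK$, observe that the pullback along each leg of a cdh-square preserves geometric motives (so the comparison functor from $\DK_\gm(\X)$ to the fiber product is automatically fully faithful), and then handle essential surjectivity by noting that if the relevant pullbacks of $M$ are geometric, Lemma~\ref{lemma:push_geom} places $p_!p^*M$, $i_!i^*M$, and $(pk)_!(pk)^*M$ in $\DK_\gm(\X)$, after which thickness and the Cartesian square from cdh-excision force $M \in \DK_\gm(\X)$. The paper writes out only the abstract blow-up case and dismisses the Nisnevich case as ``entirely similar''; you fill in that case with the dual Mayer--Vietoris cofiber sequence built from $f_!$ via Bott periodicity and the projection formula, which is a welcome bit of added precision since in the Nisnevich square the legs are smooth rather than proper, so $f_! \neq f_*$ and the naive transcription of the blow-up argument is not literally the same. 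Your closing concern about $\Nis$-quasi-projective versus quasi-projective is fair to raise (the paper never addresses it in this lemma, only in an earlier remark about the generators of $\DK_\gm$), but note that in an abstract blow-up square all three legs $p$, $i$, $pk$ are already projective, so no gap arises there; the question only touches the Nisnevich legs, and your proposed order of attack (Nisnevich descent first, then blow-ups) is a sensible way to close it if it is genuinely needed.
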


\begin{proof}
    We must prove that $\DK_\gm$ sends Nisnevich squares to Cartesian squares and abstract blowup squares to Cartesian squares. We already know $\DK$ is a $\mathrm{cdh}$-sheaf, therefore it is enough to verify if the condition of being geometric is $\mathrm{cdh}$-local. We will write the argument only for the abstract blowup, since the Nisnevich case is entirely similar. Let
    \[
    \begin{tikzcd}
    \W \ar[hook, "k"]{r} \ar["q"']{d} & \Y \ar["p"]{d} \\
    \mathscr{Z} \ar[hook, "i"']{r} & \X
    \end{tikzcd}
    \]
    is an abstract blowup diagram, and let $M \in \DK(\X)$. If $M$ is geometric, then clearly $p^* M$ and $i^*M$ are geometric. Moreover, if $p^* M$ and $i^* M$ are geometric, then $(pk)^* M$ is also geometric, and by Lemma \ref{lemma:push_geom}, the $\K$-motives $p_!p^* M$, $i_!i^* M$ and $(pk)_!(pk)^* M$ are geometric. By $\mathrm{cdh}$-excision, this implies $M$ is also geometric.
\end{proof}

\begin{lemma}\label{lemma:equiv_proj_gm}
    Let $\X$ be a tame quotient stack and let $\DK_\proj(\X)$ denote the full thick subcategory of $\DK(\X)$ generated by the collection
    \[
        \{ f_! \unit_\Y \mid f \textrm{ is projective}\}.
    \]
    Then $\DK_\proj(\X) = \DK_\gm(\X)$.
\end{lemma}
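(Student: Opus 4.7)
The statement unpacks as two inclusions, which I would treat separately. For $\DK_\proj(\X) \subseteq \DK_\gm(\X)$: if $f \colon \Y \to \X$ is projective then in particular quasi-projective, and $\unit_\Y = (\id_\Y)_! \unit_\Y \in \DK_\gm(\Y)$ (taking $\id_\Y$, which is smooth and quasi-projective, as a generator of the defining collection). Lemma \ref{lemma:push_geom} then gives $f_! \unit_\Y \in \DK_\gm(\X)$, so $\DK_\proj(\X) \subseteq \DK_\gm(\X)$.

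For the reverse inclusion $\DK_\gm(\X) \subseteq \DK_\proj(\X)$, I would start from a smooth, quasi-projective morphism $f \colon \Y \to \X$ and compactify it as $f = p \circ j$, with $j \colon \Y \hookrightarrow \bar{\Y}$ an open immersion and $p \colon \bar{\Y} \to \X$ projective, the stack $\bar{\Y}$ still lying in $\tqStk_\base$. Letting $i \colon \mathscr{Z} \hookrightarrow \bar{\Y}$ denote the reduced closed complement, the localization fiber sequence in $\DK(\bar{\Y})$
\[
j_! \unit_\Y \longrightarrow \unit_{\bar{\Y}} \longrightarrow i_*\unit_\mathscr{Z}
\]
becomes, after applying $p_!$ and using $i_* = i_!$, a fiber sequence
\[
f_!\unit_\Y \longrightarrow p_!\unit_{\bar{\Y}} \longrightarrow (pi)_!\unit_\mathscr{Z}
\]
in $\DK(\X)$. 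Since $p$ and $pi$ are projective, the two terms on the right belong by definition to $\DK_\proj(\X)$; this thick subcategory is closed under fibers, hence $f_!\unit_\Y \in \DK_\proj(\X)$ as well. Since $\DK_\gm(\X)$ is the thick subcategory generated by such $f_!\unit_\Y$, this completes the argument.

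The main technical obstacle is producing the compactification $\bar{\Y}$ inside $\tqStk_\base$. I would proceed exactly as in the projective-embedding step of the proof of Lemma \ref{lemma:push_geom}: since $\X$ has the resolution property, a suitable locally free sheaf $\mathscr{E}$ on $\X$ provides, after reducing $\mathrm{N}$-locally and using that $f$ is $\mathrm{N}$-quasi-projective, a locally closed immersion of $\Y$ into a projective bundle $\mathbb{P}_\X(\mathscr{E})$; taking $\bar{\Y}$ to be the scheme-theoretic closure of $\Y$ there yields the desired factorization. That $\bar{\Y}$ and $\mathscr{Z}$ remain tame quotient stacks follows from stability of the resolution property under closed substacks of projective bundles over $\X$, together with the fact that the ambient tame group structure is inherited from $\X$. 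Everything else in the proof is formal, and the use of Lemma \ref{lemma:push_geom} in the first inclusion silently imports its hypothesis on $\base$ (excellent, or with finitely many residue characteristics).
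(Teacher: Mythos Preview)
Your argument for the inclusion $\DK_\gm(\X) \subseteq \DK_\proj(\X)$ is essentially identical to the paper's: factor a smooth quasi-projective $f$ as an open immersion followed by a projective morphism, and use the localization triangle to express $f_!\unit_\Y$ as a fiber of two objects coming from projective morphisms. (Your extra discussion of how to produce $\bar{\Y}$ is not needed: since $f$ is already assumed quasi-projective, the factorization $f = p \circ j$ with $j$ an open immersion and $p$ projective is available by definition, and one can simply take $\bar{\Y} = \mathscr{P}$.)

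The difference lies in the other inclusion $\DK_\proj(\X) \subseteq \DK_\gm(\X)$. You invoke Lemma~\ref{lemma:push_geom}, which is a nontrivial result relying on Artin algebraization and, as you note, carries a hypothesis on $\base$ that is absent from the statement of the present lemma. The paper instead gives a direct, unconditional argument: for projective $f \colon \Y \to \X$, factor $f$ through a closed immersion $i \colon \Y \hookrightarrow \mathbb{P}_\X(\mathscr{E})$ into a projective bundle, and let $\mathscr{U}$ be the open complement. Both $\mathbb{P}_\X(\mathscr{E}) \to \X$ and $\mathscr{U} \to \X$ are smooth and quasi-projective, so pushing forward the localization sequence for $(\mathscr{U}, \Y)$ along $\mathbb{P}_\X(\mathscr{E}) \to \X$ exhibits $f_!\unit_\Y$ as a cofiber of two generators of $\DK_\gm(\X)$. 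This is the same localization trick you used for the reverse inclusion, just run with the roles of open and closed swapped; it is lighter than appealing to Lemma~\ref{lemma:push_geom} and, crucially, proves the lemma in the generality in which it is stated.
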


\begin{proof}
    If $f \colon \Y \to \X$ is a projective morphism, then there exists a locally free sheaf $\mathscr{E}$ on $\X$ such that $f$ factors through a closed immersion $i \colon \Y \hookrightarrow \mathbb{P}_\X(\mathscr{E})$. If $\mathscr{U}$ denotes the open complement of $\Y$ in $\mathbb{P}_\X(\mathscr{E})$, then both morphisms $\mathbb{P}(\mathscr{E}) \to \X$ and $\mathscr{U} \to \X$ are smooth and quasi-projective, and applying the localization sequence on this open-closed decomposition proves $\DK_\proj(\X) \subset \DK_\gm(\X)$.

    To prove the other inclusion, let $f \colon \Y \to \X$ be a smooth, quasi-projective morphism. By hypothesis, $f \colon \Y \to \X$ factors as
    \[
        \Y \xhookrightarrow{j} \mathscr{P} \xrightarrow{p} \X,
    \]
    where $j$ is an open immersion and $p$ is projective. If $\mathscr{Z}$ denotes the closed complement of $\Y$ inside of $\mathscr{P}$, then both morphisms $\mathscr{Z} \to \X$ and $\mathscr{P} \to \X$ are projective and another application of the localization theorem implies $f_! \unit_\Y$ is in the thick subcategory generated by $g_! \unit_\mathscr{Z}$, where $g \colon \mathscr{Z} \to \X$ is projective.
\end{proof}

\begin{lemma}\label{lemma:gm_resol}
    If $\base$ a Noetherian quasi-excellent scheme over $\mathrm{Spec}(\mathbb{Q})$ of finite Krull dimension, then for any tame quotient stack $\X$ over $\base$, one has $\DK_\gm(\X) = \DK_\resol(\X)$.
\end{lemma}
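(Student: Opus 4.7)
The easy inclusion $\DK_\resol(\X) \subset \DK_\gm(\X)$ is immediate from Lemma \ref{lemma:equiv_proj_gm}: any generator $f_!\unit_\Y$ of $\DK_\resol(\X)$, with $f\colon \Y \to \X$ projective and $\Y$ regular, belongs to $\DK_\proj(\X) = \DK_\gm(\X)$. For the reverse inclusion, again by Lemma \ref{lemma:equiv_proj_gm} it suffices to show $f_!\unit_\Y \in \DK_\resol(\X)$ whenever $f\colon \Y \to \X$ is projective. The plan is Noetherian induction on $\dim \Y$. As a first step I would reduce to the case where $\Y$ is reduced: the closed immersion $\nu\colon \Y_\red \hookrightarrow \Y$ has empty open complement, so $\nu^*$ is an equivalence with inverse $\nu_! = \nu_*$, and therefore $\nu_!\unit_{\Y_\red} \cong \unit_\Y$, whence $f_!\unit_\Y \cong (f\nu)_!\unit_{\Y_\red}$.

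Writing $\Y = [\sY/\G]$ with $\G$ smooth (tame groups are smooth in characteristic zero), Lemma \ref{lemma:resolution} furnishes a $\G$-equivariant projective birational morphism $\mathscr{F}(\sY) \to \sY$ with $\mathscr{F}(\sY)$ regular. Descending to stacks gives $\pi\colon \Y' \to \Y$, projective and birational, with $\Y'$ regular. Let $U \subset \Y$ be a dense open over which $\pi$ is an isomorphism, let $i\colon \Z \hookrightarrow \Y$ be the reduced closed complement, and let $k\colon \W \coloneqq \pi^{-1}(\Z) \hookrightarrow \Y'$. This is an abstract blow-up square, so cdh descent (which $\DK$ satisfies) places $\unit_\Y$ in the thick subcategory of $\DK(\Y)$ generated by $\pi_!\unit_{\Y'}$, $i_!\unit_\Z$, and $(\pi k)_!\unit_\W$. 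Applying the exact functor $f_!$ transfers this to $f_!\unit_\Y$ with generators $(f\pi)_!\unit_{\Y'}$, $(fi)_!\unit_\Z$, and $(f\pi k)_!\unit_\W$. The first is a defining generator of $\DK_\resol(\X)$ since $\Y'$ is regular and $f\pi$ is projective; the second is in $\DK_\resol(\X)$ by the induction hypothesis since $\dim \Z < \dim \Y$.

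The crux, and the only step requiring a careful argument, is the exceptional term $(f\pi k)_!\unit_\W$: although $\W$ is a proper closed substack of the regular stack $\Y'$, it might a priori have the same dimension as $\Y$. The key observation is that a regular stack is locally a disjoint union of integral components, and birationality of $\pi$ yields a dimension-preserving bijection between the irreducible components of $\Y'$ and those of $\Y$. Since $\Y$ is reduced, $U$ is dense in every component, hence $\pi^{-1}(U) = \Y' \setminus \W$ is dense in every component $\Y'_i$ of $\Y'$, forcing $\W \cap \Y'_i \subsetneq \Y'_i$ and therefore $\dim(\W \cap \Y'_i) < \dim \Y'_i$ for each $i$. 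Taking the maximum over $i$ gives $\dim \W < \dim \Y' = \dim \Y$, so the induction hypothesis applies to $(f\pi k)_!\unit_\W$ and the argument closes. A secondary bookkeeping concern is to verify that $\Y'$, $\Z$, and $\W$ all lie in $\tqStk_\base$, which should hold because each is built as a $\G$-equivariant projective or closed modification of $\sY$ and thereby inherits the resolution property and $\Nis$-quasi-projectivity over $\base$.
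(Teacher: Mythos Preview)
Your proposal is correct and follows essentially the same route as the paper's proof: reduce via Lemma~\ref{lemma:equiv_proj_gm} to projective $f$, pass to the reduced case, apply the equivariant resolution from Lemma~\ref{lemma:resolution}, and use $\mathrm{cdh}$-excision plus dimension induction. The only cosmetic differences are that the paper inducts on $\dim \sY$ (the scheme presenting $\Y$) rather than $\dim \Y$, and that it cites \cite[Lemma 4.20]{Chow-weight-stacks} for the strict dimension drop of $\W$ and $\Z$ where you supply the irreducible-components argument directly.
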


\begin{proof} 
    In view of Lemma \ref{lemma:equiv_proj_gm}, it suffices to prove that $f_! \unit_\Y \in \DK_\resol(\X)$ for any $f \colon \Y \to \X$ projective morphism. Before beginning the proof, observe that, if $\G$ is a tame group for which $\X = [\sX/\G]$, for any quasi-projective morphism $f \colon \Y \to \X$ there exists a quasi-projective morphism $f \colon Y \to X$ between schemes such that $\Y = [\sY/\G]$. We will prove the theorem on induction on the dimension of the scheme $\sY$. By reduced invariance, we may suppose $\Y = [\sY/\G]$ is reduced. If the $\dim \sY \leq 0$, then $\sY$ is a finite discrete reduced scheme and $\Y$ is, therefore, regular Noetherian (where we use \cite[\href{https://stacks.math.columbia.edu/tag/0DLS}{Tag 0DLS}]{stacks-project}). Suppose now that the result is true for any $\mathscr{Z} = [\sZ/\G]$ such that $\dim \sZ < \dim \sY$.
    
    By Lemma \ref{lemma:resolution}, there exists a projective birational morphisms $p \colon \widetilde{\Y} = [\mathscr{F}(\sY)/\G] \to \Y$ such that $\mathscr{F}(\sY)$ is regular. If $i \colon \mathscr{Z} \to \Y$ denotes the closed immersion complementary to the dense open subscheme where $p$ is an isomorphism, we have a $\mathrm{cdh}$-square 
    \[
    \begin{tikzcd}
    \mathscr{Z}' \ar["q"']{d} \ar["q", hook]{r} & \widetilde{\Y} \ar["p"']{d} \\
    \mathscr{Z} \ar["i", hook]{r} & \Y.
    \end{tikzcd}
    \]

    By $\mathrm{cdh}$-excision, the square
    \[
    \begin{tikzcd}
        \unit_\Y \ar{r} \ar{d} & i_! \unit_{\mathscr{Z}} \ar{d} \\
        p_! \unit_{\widetilde{\Y}} \ar{r} & (pk)_! \unit_{\mathscr{Z}'}
    \end{tikzcd}
    \]
    is Cartesian. By \cite[Lemma 4.20]{Chow-weight-stacks}, the dimensions of $\sZ$ and $\sZ'$ are strictly smaller than the dimension of $\sY$, where $\mathscr{Z}' = [\sZ'/\G]$. By induction hypothesis, applying $f_!$ to the diagram above shows $f_! 1_\Y \in \DK_\resol(\X)$, as we wanted to prove.
\end{proof}

\begin{cor}
    If $\base$ a Noetherian quasi-excellent scheme over $\mathrm{Spec}(\mathbb{Q})$ of finite Krull dimension, then for any tame quotient stack $\X$ over $\base$, there exists a weight structure on $\DK_\gm(\X)$ whose heart is given by $\Pure(\X)$.
\end{cor}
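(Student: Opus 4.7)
The plan is to deduce this corollary directly by combining the two preceding results. Lemma \ref{lemma:gm_resol} provides, under the stated hypotheses on $\base$, an equality of full subcategories
\[
    \DK_\gm(\X) = \DK_\resol(\X)
\]
inside $\DK(\X)$ for any tame quotient stack $\X$ over $\base$. On the other hand, Theorem \ref{thm:weight_res} constructs a bounded weight structure on $\DK_\resol(\X)$ whose heart is $\Pure(\X)$. Transporting this weight structure along the above equality yields the claimed weight structure on $\DK_\gm(\X)$ with heart $\Pure(\X)$.

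There is essentially nothing to verify beyond citing these two results; the heart, the weight-positive part, and the weight-negative part are simply read off from those already produced for $\DK_\resol(\X)$, and the axioms (closure under the shift $[1]$ on positives and $[-1]$ on negatives, orthogonality, and the existence of weight decompositions for every object) hold automatically because they hold in $\DK_\resol(\X)$. Boundedness is likewise inherited. Since all hypotheses of Theorem \ref{thm:weight_res} and Lemma \ref{lemma:gm_resol} are satisfied under the assumption that $\base$ is Noetherian quasi-excellent over $\Spec(\QQ)$ with finite Krull dimension, no additional argument is required, and there is no genuine obstacle to overcome: the corollary is a formal combination.
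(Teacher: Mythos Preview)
Your proposal is correct and matches the paper's approach exactly: the corollary is stated without proof in the paper, as it follows immediately from combining Theorem \ref{thm:weight_res} and Lemma \ref{lemma:gm_resol}, which is precisely what you do.
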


\section{An Application}

Let $\basef$ be an algebraically closed field of characteristic, let $\G$ be a reductive group over $\basef$ and let $\N_\G$ the nilpotent cone of $\G$. In \cite{k-motives}, Eberhardt defined the category of reduced Springer $\K$-motives, in the same way as \cite{reduced_motives} and proved in \cite[Theorem 5.2]{k-motives} that
\[
    \DK_\redm^\mathrm{Spr}(\N_\G/(\G \times \GG_m)) = \mathrm{Perf}(\mathscr{H}_{\mathrm{aff}, \mathbf{q}}),
\]
where $\mathscr{H}_{\mathrm{aff}, \mathbf{q}}$ is the affine Hecke algebra associated to $\G$. He then used this equivalence to relate the left-hand side with the unramified local Langlands correspondence. We prove an orthogonality result which implies, in particular, the category of Springer $\K$-motives admits an orthogonal complement. It may be seen as a strengthening of a $\K$-theoretic version of \cite[Theorem 5.1]{hyperbolic_achar}.

\begin{thm}
    There exists an orthogonal decomposition
    \[
        \DK_\gm(\N_\G/(\G \times \GG_m)) = \bigoplus_\L \DK_\gm(\N_\G/(\G \times \GG_m))_\L
    \]
    indexed by conjugacy classes of Levi subgroups. The same recomposition is also true for \emph{reduced} $\K$-motives.
\end{thm}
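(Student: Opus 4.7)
The plan is to establish the decomposition via parabolic induction and restriction functors, in the spirit of the Bernstein decomposition and the generalized Springer correspondence, adapted to the $\K$-motivic setting. For each parabolic subgroup $\P \subset \G$ with Levi decomposition $\P = \L \ltimes \mathrm{U}_\P$, let $\pp^\N \subset \pp$ denote the preimage of $\N_\L$ under the projection $\pp \twoheadrightarrow \ll$, and set $\widetilde{\N}_\P := \G \times^\P \pp^\N$. One obtains the Grothendieck--Springer-type correspondence
\[
\begin{tikzcd}
\N_\G/(\G \times \GG_m) & \widetilde{\N}_\P/(\G \times \GG_m) \ar["\mu"']{l} \ar["\pi"]{r} & \N_\L/(\L \times \GG_m),
\end{tikzcd}
\]
in which $\mu$ is projective and $\pi$ is smooth, being a $\G$-equivariant affine bundle. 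Define parabolic induction $\ind_\P^\G := \mu_! \pi^*$; it preserves geometric $\K$-motives by Lemma \ref{lemma:push_geom} and the compatibility of smooth pullback with $\DK_\gm$. The right adjoint $\res_\P^\G := \pi_* \mu^!$ is identified with (a shift of) its left adjoint $\pi_! \mu^*$ via Bott periodicity and a hyperbolic-localization (Braden-type) argument, and therefore preserves geometric $\K$-motives as well.

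The key technical input is a Mackey-type formula: for parabolics $\P, \Q$ with Levis $\L, \M$, the composition $\res_\P^\G \circ \ind_\Q^\G$ decomposes as a sum over double cosets $w \in \sW_\L \backslash \sW_\G / \sW_\M$ of induction--restriction functors factoring through the smaller Levi $\L \cap {}^w \M$. This follows from a base-change computation on the fibre product $\widetilde{\N}_\P \times_{\N_\G} \widetilde{\N}_\Q$, stratified by the $\G$-orbits on $\P \backslash \G / \Q$ and combined with the localization sequence and $\mathrm{cdh}$-excision recalled in Section 1. A standard corollary is that the essential image of $\ind_\P^\G$ depends only on the $\G$-conjugacy class of $\L$, justifying the indexing of the decomposition by conjugacy classes.

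Define the cuspidal subcategory $\DK_\gm^{\cusp}(\N_\L/(\L \times \GG_m))$ as the thick subcategory of objects annihilated by $\res_{\P'}^\L$ for every proper parabolic $\P' \subsetneq \L$, and set $\DK_\gm(\N_\G/(\G \times \GG_m))_\L$ to be the thick subcategory generated by $\ind_\P^\G M$ with $M$ cuspidal. For non-conjugate Levi classes $[\L] \neq [\M]$, the adjunction
\[
    \Hom(\ind_\P^\G M, \ind_\Q^\G N) = \Hom(M, \res_\P^\G \ind_\Q^\G N)
\]
and the Mackey formula exhibit the right-hand side as a sum of terms of the form $\Hom(M, \ind_{\L'}^\L(-))$ for proper Levis $\L' \subsetneq \L$. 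Using the Braden identity between the two adjoints of $\ind_{\L'}^\L$, each such $\Hom$ equals $\Hom(\res_{\L'}^\L M, -)$ up to shift, which vanishes by cuspidality of $M$; hence orthogonality. For generation, Lemma \ref{lemma:equiv_proj_gm} presents $\DK_\gm(\N_\G/(\G \times \GG_m))$ as the thick subcategory generated by $f_! \unit_\Y$ with $f$ projective, and Lemma \ref{lemma:resolution} applied $(\G \times \GG_m)$-equivariantly allows one to assume $\Y$ smooth. An equivariant dévissage through a partial flag stratification then reduces each such object to parabolic Springer motives $\mu_! \unit_{\widetilde{\N}_\P}$, so every geometric $\K$-motive belongs to the sum of the blocks. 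The statement for reduced motives follows identically, as both $\ind_\P^\G$ and $\res_\P^\G$ commute with the reduction functor of \cite{k-motives}.

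The main obstacle will be the Mackey formula: the stratification of $\P \backslash \G / \Q$ must be tracked carefully in the $(\G \times \GG_m)$-equivariant $\K$-motivic context, and the base change requires a stratum-by-stratum analysis of the fibre product $\widetilde{\N}_\P \times_{\N_\G} \widetilde{\N}_\Q$. Bott periodicity eliminates the Tate-twist shifts that accompany such a computation in the $\DM$ setting, which is a convenient simplification. A secondary subtlety is the Braden-type identity between the two adjoints of $\ind_\P^\G$, needed to make cuspidality effective as a two-sided condition; in the $\K$-motivic setting this should follow from the $\GG_m$-equivariance and standard hyperbolic-localization techniques.
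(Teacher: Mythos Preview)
Your strategy---parabolic induction/restriction, a Mackey-type formula obtained from the Bruhat stratification of the fibre product, a Braden/hyperbolic-localization identity relating the two adjoints, and cuspidality---is exactly the architecture the paper uses. Two points deserve comment, one a genuine difference in method and one a gap.

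\textbf{Different route via weights.} The paper's proof is explicitly designed as an application of the Chow weight structure built in Sections~2--3, and you bypass weights entirely. Concretely, the paper shows that $\ind_{\L\subset\P}^{\G}$ is weight exact (Proposition~\ref{prop:exactness_prop}), and then uses hyperbolic localization to identify $\res_{\L\subset\P}^{\G}\cong{}'\res_{\L\subset\overline{\P}}^{\G}$, so that both restrictions are weight exact as well. One can therefore restrict the entire argument to the heart $\DK_\gm(-)^{\heartsuit_w}$. The Mackey computation (imported from \cite{stacky_springer}) only produces a \emph{filtration} of $\res_{\L\subset\P}^{\G}\ind_{\L'\subset\P'}^{\G}M$; the paper's point is that on objects of the heart any fibre sequence of weight-zero objects splits, so the filtration becomes the honest direct sum
\[
\res_{\L\subset\P}^{\G}\ind_{\L'\subset\P'}^{\G}M\;\cong\;\bigoplus_{w\in \sW_{\L'}\backslash \sW/\sW_{\L}}\ind^{\L}_{\L\cap{}^w\L'}\res^{{}^w\L'}_{\L\cap{}^w\L'}\operatorname{ad}(w^{-1})^{*}M.
\]
Your sketch asserts a ``sum'' directly; without the weight-structure input that is not what the stratification gives you. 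Your argument is still salvageable for orthogonality alone (a filtration whose associated graded vanishes term-by-term is zero), but you should not write it as a direct sum, and the paper's route has the advantage of producing the decomposition already at the level of objects in $\Pure$, which is what links the theorem back to the rest of the paper.

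\textbf{A gap in generation.} The paper defines the blocks via the recollement-type filtration $\DK(-)^{\mathrm{ren}}_{\geq\L}\supset\DK(-)^{\mathrm{ren}}_{>\L}$ (objects killed by restriction to strictly smaller Levis), identifies the subquotients with the images of cuspidal induction following \cite{gunningham_derived}, and then orthogonality splits this filtration; exhaustion is automatic since everything is $\geq \T$. Your generation argument is different and, as written, does not go through: you propose to dévisser an arbitrary $f_{!}\unit_{\Y}$ (with $f$ projective and $\Y$ regular) down to partial Springer motives $\mu_{!}\unit_{\widetilde{\N}_{\P}}$ via a ``partial flag stratification'', but no such stratification of a general regular projective $\Y\to\N_\G/(\G\times\GG_m)$ is supplied, and there is no reason to expect one. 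Even granting that step, $\mu_{!}\unit_{\widetilde{\N}_{\P}}=\ind_{\L\subset\P}^{\G}\unit_{\N_\L}$ is the induction of a typically \emph{non}-cuspidal object, so you would still need an inductive decomposition on $\L$ to place it in a block. Replacing this paragraph with the recollement filtration (or the equivalent ``restrict until cuspidal, peel off the counit, iterate'' argument) closes the gap cleanly.
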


\begin{proof}
    Before going into the proof, we will explain the notation, which comes from \cite[\S 3]{gunningham_derived}. For any parabolic subgroup $\P$ with Levi subgroup $\L$, we consider the span
    \[
        \N_\L/(\L \times \GG_m) \xleftarrow{\pi_\P} \N_\P/(\P \times \GG_m) \xrightarrow{\mu_\P} \N_\G/(\G \times \GG_m),
    \]
    where $\mu_\P$ is a smooth map of relative dimension 0 and $\pi_\P$ is a projective map. We define the parabolic induction by
    \[
    \ind_{\L \subset \P}^\G \coloneqq \mu_{\P!} \pi_\P^* \cong \mu_{\P*} \pi_\P^{!} \colon \DK_\gm(\N_\L/(\L \times \GG_m)) \to \DK_\gm(\N_\G/(\G \times \GG_m))
    \]
    and the parabolic restrictions
    \[
    \res_{\L \subset \P}^{\G} \coloneqq \pi_{\P*}\mu_\P^! \colon \DK_\gm(\N_\G/(\G \times \GG_m)) \to \DK_\gm(\N_\L/(\L \times \GG_m))
    \]
    and
    \[
    {}' \res_{\L \subset \P}^{\G} \coloneqq \pi_{\P!}\mu_\P^* \colon \DK_{\gm}(\N_\G/(\G \times \GG_m)) \to \DK_{\gm}(\N_\L/(\L \times \GG_m)).
    \]
    as its right and left adjoints, respectively.

    By Proposition \ref{prop:exactness_prop}, the functor $\ind_{\L \subset \P}^\G$ is weight exact and therefore by \cite[Theorem 1.2.3(9)]{bondarko_weights} the functors $\res_{\L \subset \P}^{\G}$ and ${}' \res_{\L \subset \P}^{\G}$ are right and left adjoints, respectively. By \cite[\S 3]{hyperbolic_stacks}, the motivic coefficient system $\DK$ satisfies hyperbolic localization over algebraic stacks. As already remark in \cite[Proposition 4.3]{parabolic_rest}, this implies that one has an isomorphism $\res_{\L \subset \P}^\G \cong {}'\res_{\L \subset \overline{\P}}^\G$, where $\overline{\P}$ is the parabolic opposite to $\P$. In particular, the functors $\res_{\L \subset \P}^\G$ and ${}'\res_{\L \subset \P}^\G$ are weight exact. We may, therefore, restrict the adjoint functors to the heart of the corresponding categories:
    \[
        \ind_{\L \subset \P}^\G \colon \DK_\gm(\N_\L/(\L \times \GG_m))^{\heartsuit_w} \rightleftharpoons \DK_\gm(\N_\G/(\G \times \GG_m))^{\heartsuit_w} \colon \res_{\L \subset \P}^\G.
    \]
    Let $\L$ be a Levi subgroup of $\G$ and denote by $\DK(\X)^\mathrm{ren} \coloneqq \Ind(\DK_\gm(\X))$ the Ind-completion of the category of geometric motives $\DK_\gm(\X)$ for any tame quotient stack $\X$. 

    \begin{enumerate}[(a)]
        \item We denote by $\DK(\N_\G/(\G \times \GG_m))^\mathrm{ren}_{\geq \L}$ (resp. $\DK(\N_\G/(\G \times \GG_m))^\mathrm{ren}_{> \L}$) the full subcategory of graded sheaves killed by parabolic restriction to any $\L'$ such that $\L' \ngeq \L$ (resp. $\L' \ngtr \L$).
        \item We denote by $\DK(\N_\G/(\G \times \GG_m))^\mathrm{ren}_{\L}$ the quotient $\DK(\N_\G/(\G \times \GG_m))^\mathrm{ren}_{\geq \L}/\DK(\N_\G/(\G \times \GG_m))^\mathrm{ren}_{> \L}$.
        \item We denote by $\DK(\N_\L/(\L \times \GG_m))^\mathrm{ren}_{\cusp}$ the full subcategory of $\DK(\N_\L/(\L \times \GG_m))^\mathrm{ren}$ spanned by cuspidal sheaves.
    \end{enumerate}

    As in \cite[Convention 3.10]{gunningham_derived}, one may identify the quotient $\DK(\N_\G/(\G \times \GG_m))^\mathrm{ren}_{\L}$ with the subcategory of $\DK(\N_\G/(\G \times \GG_m))^\mathrm{ren}$ generated under colimits by $\ind_{\L \subset \P}^\G \DK(\N_\L/(\L \times \GG_m))^\mathrm{ren}_{\cusp}$. Therefore, to prove the orthogonality result, it suffices to show that
    \[
        \Map(\ind_{\L \subset \P}^\G M, \ind_{\L' \subset \P'}^\G M') = 0,
    \]
    for any pair of non-conjugate Levi subgroups $\L$, $\L'$, and for any $M \in \DK_\gm(\N_\L/(\L \times \GG_m))_{\cusp}^{\heartsuit_w}$ and $M' \in \DK_\gm(\N_{\L'}/(\L' \times \GG_m))_{\cusp}^{\heartsuit_w}$. By adjunction, it suffices to understand $\res_{\L \subset \P}^\G \ind_{\L' \subset \P'}^\G$ and we will do so following \cite[\S 2.5]{stacky_springer}. By base change, the functor $\res_{\L \subset \P}^\G \ind_{\L' \subset \P'}^\G$ is given by $!$-pulling and $*$-pushing along the following correspondence
    \[
    \begin{tikzcd}[column sep = -20pt]
        & & \X \ar{dr} \ar{dl} & & \\
        & \N_{\P'}/(\P' \times \GG_m) \ar["\mu_{\P'}"]{dr} \ar["\pi_{\P'}"']{dl} & & \N_{\Q}/(\Q \times \GG_m) \ar["\pi_{\P}"]{dr} \ar["\mu_{\P}"']{dl} & \\
        \N_{\L'}/(\L' \times \GG_m) & & \N_\G/(\G \times \GG_m) & & \N_{\L}/(\L\times \GG_m),
    \end{tikzcd}
    \]
    where $\X$ admits a stratification $\X = \bigsqcup_w \X_w$ indexed by $\sW_{\L'} \backslash \sW / \sW_\L$, by \cite[\S 2.5]{stacky_springer}.

    Similarly, for any representative $w$ of the double coset $\sW_{\L'} \backslash \sW / \sW_\L$, one may take a lift of $w$ in $\G$ as in \cite[Lemma 2.3.1]{stacky_springer} and consider the correspondence
    \[
    \begin{tikzcd}[column sep = -40pt]
        & & \Y_w \ar{dr} \ar{dl} \ar[bend right = 70, "\alpha_w"']{ddll} \ar[bend left = 70, "\beta_w"]{ddrr} & & \\
        & \N_{\P \cap {}^w \L'}/((\P \cap {}^w \L')\times \GG_m) \ar{dr} \ar{dl} & & \N_{\L \cap {}^w \P'}/((\L \cap {}^w \P')\times \GG_m) \ar{dr} \ar{dl} & \\
        \N_{{}^w \L'}/({}^w \L'\times \GG_m) & & \N_{\L \cap {}^w \L'}/((\L \cap {}^w \L')\times \GG_m) & & \N_{\L}/(\L \times \GG_m).
    \end{tikzcd}
    \]

    By \cite[Theorem 2.5.3]{stacky_springer}, there exists a map $f_w \colon \X_w \to \Y_w$ over $\N_{\L'}/(\L'\times \GG_m) \times \N_{\L}/(\L\times \GG_m)$ such that
    \[
        f_{w*} \circ f_w^! \cong \mathrm{id} \colon \DK_\gm(\Y_w) \to \DK_\gm(\Y_w).
    \] 
    Indeed, to prove this, it suffices to explain why \cite[Lemma 2.2.3]{stacky_springer} is still true for $\DK$.

    Since $\DK$ is homotopy invariant, by \cite[Theorem 1.1(8)]{hoyois_1}, it follows that for any vector bundle $f \colon \X \to \Y$, the map $f^*$ is fully faithful, and therefore $f_* f^! \cong f_*f^* \cong \mathrm{id}$. Furthermore, let $\overline{f} \colon \sX/\G \to \sX/\H$ be a map induced by a morphism $f \colon \G \to \H$ exhibiting $\H$ as the reductive quotient of $\G$. Since we are in characteristic zero, there exists a section $\sigma \colon \H \to \G$ and the map $\overline{\sigma} \colon \sX/\H \to \sX/\G$ is a section of $\overline{f}$ and is $\mathrm{R}_\mathrm{u}(\G)$-torsor. Since $\mathrm{R}_\mathrm{u}(\G)$ is also a vector bundle, using once again homotopy invariance, it follows that pullback with respect to $\overline{\sigma}$ is fully faithful and therefore the same is true for $\overline{f}$.
    
    In particular, arguing as in \cite[Proof of Lemma 2.5.1]{stacky_springer}, we have for that $\res_{\L \subset \P}^\G \ind_{\L' \subset \P'}^\G M$ admits a filtration by $\ind_{\L \cap {}^w \L' \subset \L \cap {}^w \P'}^{\L_2} \res_{\L \cap {}^w \L' \subset \L \cap {}^w \P'}^{{}^w \L'} \circ \operatorname{ad}(w^{-1})^* M$ for any $M \in \DK_\gm(\N_{\L'}/(\L' \times \GG_m))$. 

    A fiber sequence between two elements in the heart of a weight structure always splits and therefore, using the fact that parabolic induction and restrictions are weight exact, if $M \in \DK_\gm(\N_{\L'}/(\L' \times \GG_m))^{\heartsuit_w}$ one has
    \[
        \res_{\L \subset \P}^{\G} \ind_{\L' \subset \P'}^{\G} M \cong \bigoplus_{w \in \sW_{\L'} \backslash \sW / \sW_\L} \ind_{\L \cap {}^w \L' \subset \L \cap {}^w \P'}^{\L} \res_{\L \cap {}^w \L' \subset \L \cap {}^w \P'}^{{}^w \L'} \circ \operatorname{ad}(w^{-1})^*M.
    \]

    We can now conclude our proof. Let  $\L$ and $\L'$ be non-conjugate Levi subgroups of $\G$, and let $M \in \DK_\gm(\N_\L/(\L\times \GG_m))_{\cusp}^{\heartsuit_w}$ let $M' \in \DK_\gm(\N_{\L'}/(\L'\times \GG_m))_{\cusp}^{\heartsuit_w}$ be cuspidal $\K$-motives of weight zero. In this case, for any $w \in \G$,
    \[
    \res_{\L \cap {}^w \L' \subset \L \cap {}^w \P'}^{{}^w \L'} \circ \operatorname{ad}(w^{-1})^*M' = 0,
    \]
    since $M'$ is cuspidal and $\L \cap {}^w \L'$ is a proper Levi subgroup of ${}^w \L'$. Therefore
    \[
    \begin{split}
        \Map(\ind_{\L \subset \P}^\G M, \ind_{\L' \subset \P'}^\G M') &= \Map(M, \res_{\L \subset \P}^\G \ind_{\L' \subset \P'}^\G M') \\
        &= \Map(M, 0) \\
        &= 0,
    \end{split}
    \]
    proving the desired orthogonality.
\end{proof}

\end{document}